\newcommand{\CM}{Cohen-Macaulay}
\newcommand{\wrt}{with respect to}
\newcommand{\n}{\mathfrak{n} }
\newcommand{\m}{\mathfrak{m} }
\newcommand{\eU}{\mathfrak{U} }
\newcommand{\eC}{\mathfrak{C} }
\newcommand{\q}{\mathfrak{q} }
\newcommand{\g}{\mathfrak{g} }
\newcommand{\ZZ}{\mathbb{Z} }
\newcommand{\FF}{\mathbb{F}}
\newcommand{\C}{\mathbf{C} }
\newcommand{\rt}{\rightarrow}
\newcommand{\ov}{\overline}
\newcommand{\image}{\operatorname{image}}
\newcommand{\Gr}{\operatorname{Gr}}
\newcommand{\Ass}{\operatorname{Ass}}
\newcommand{\mode}{\operatorname{mod} }
\newcommand{\rk}{\operatorname{rk}}
\newcommand{\height}{\operatorname{height}}
\newcommand{\charp}{\operatorname{char}}
\newcommand{\Der}{\operatorname{Der}}
\newcommand{\rank}{\operatorname{rank}}
\newcommand{\Proj}{\operatorname{Proj}}
\newcommand{\Sing}{\operatorname{Sing}}
\newcommand{\Hom}{\operatorname{Hom}}
\theoremstyle{plain}
\newtheorem{theorem}{Theorem}[section]
\newtheorem{lemma}[theorem]{Lemma}
\newtheorem{proposition}[theorem]{Proposition}
\newtheorem{question}[theorem]{Question}
\theoremstyle{definition}
\newtheorem{remark}[theorem]{Remark}
\newtheorem{example}[theorem]{Example}
\theoremstyle{remark}
\begin{document}

\title{On the ring of differential operators of certain regular domains }
 \author{Tony J. Puthenpurakal}
\date{\today}
\address{Department of Mathematics, Indian Institute of Technology Bombay, Powai, Mumbai 400 076, India}
\email{tputhen@math.iitb.ac.in}
\subjclass{Primary 13N10; Secondary 13N15, 13D45 }
\keywords{rings of differential operators, local cohomology}
\begin{abstract}
Let $(A,\m)$ be a complete equicharacteristic Noetherian domain of dimension $d + 1 \geq 2$. Assume $k = A/\m$ has characteristic zero and that $A$ is not a regular local ring. Let $\Sing(A)$  the singular locus of $A$ be defined by an ideal $J$ in $A$. Note $J \neq 0$. Let $ f \in J$ with $f \neq 0$. Set $R = A_f$. Then $R$ is a regular domain of dimension $d$.  We show $R$ contains naturally a field $\ell \cong k((X))$.  Let $\g$ be the set of $\ell$-linear derivations of $R$ and let $D(R)$ be the subring of $\Hom_\ell(R,R)$ generated by $\g$ and the multiplication operators defined by elements in the ring $R$. We show that $D(R)$, the ring of $\ell$-linear differential operators on $R$,  is a left, right Noetherian ring of global dimension $d$.  This enables us to prove Lyubeznik's conjecture on $R$ modulo a conjecture on roots of Bernstein-Sato polynomials over power series rings.
\end{abstract}

\maketitle

\section{introduction}
Let $K$ be a field of characteristic zero  and let $R$ be a commutative Noetherian domain containing $K$ as a subring. Let $\g$ be the set of $K$-linear derivations of $R$ and let $D(R)$ be the subring of $\Hom_K(R,R)$ generated by $\g$ and the multiplication operators defined by elements in the ring $R$.
The ring $D(R)$ is called the ring of \emph{$K$-linear differential operators} on $R$.  In general $D(R)$ does not have good properties. However in the following cases it is known that $D(R)$ is both left and right Noetherian with finite global dimension:
\begin{enumerate}
\item
$R = K[X_1,\ldots, X_n]$. In this case $D(R) = A_n(K)$ the $n^{th}$-Weyl algebra over $K$.  We have global dimension of $D(R)$ is equal to $n$, see \cite[Chapter 2, Theorem 3.15 ]{B}. 
\item
$R = K[[X_1,\ldots, X_n]]$. In this case  global dimension of $D(R)$ is equal to $n$, see \cite[Chapter 3, Proposition 1.8]{B}.
\item
Let $K = \mathbb{C}$ and let $V$ be a non-singular affine $K$-variety. Let $R$ be the co-ordinate ring of $V$. In this case  global dimension of $D(R)$ is equal to $\dim V$, see \cite[Chapter 3, Theorem 2.5]{B}.
\item
Let $R = \mathbb{C}\{z_1,\ldots,z_n\}$ be the local ring of convergent power series in $n$-variables.  In this case  global dimension of $D(R)$ is equal to $n$, see \cite[p.\ 197]{B}.
\end{enumerate}
In this paper we describe a \emph{new}  vast class of Noetherian domains $R$ with $D(R)$ both left and right Noetherian and with finite global dimension. 

\s \label{Hypothesis} \textit{Setup:} 
Let $(A,\m)$ be a complete equicharacteristic Noetherian domain of dimension $d + 1 \geq 2$. Assume $k = A/\m$ has characteristic zero and that $A$ is not a regular local ring. Let  $\Sing(A)$  the singular locus of $A$ be defined by an ideal $J$ in $A$. Note $J \neq 0$. Let $ f \in J$ with $f \neq 0$. Set $R = A_f$. Then $R$ is a regular domain of dimension $d$.  In \ref{subfield}  we show that  $R$ contains naturally a field $\ell \cong k((X))$.  Let $\g$ be the set of $\ell$-linear derivations of $R$ and let $D(R)$ be the subring of $\Hom_\ell(R,R)$ generated by $\g$ and the multiplication operators defined by elements in the ring $R$.  The main result of this paper is
\begin{theorem} \label{main}[ with hypotheses as in \ref{Hypothesis}]
The ring  $D(R)$  is a left, right Noetherian ring of global dimension $d$.  
\end{theorem}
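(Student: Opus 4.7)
My plan is to prove the theorem via the classical order filtration on $D(R)$, reduce to understanding its associated graded, and then apply the filtered-ring lifting theorems of Björk (cf.\ \cite[Chapters 2--3]{B}).

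\textbf{Step 1: Smoothness of $\ell \to R$.} The crux is to show that $\Omega_{R/\ell}$ is a finitely generated projective $R$-module of rank $d$, equivalently that $\g = \Der_\ell(R)$ is projective of rank $d$. Since $R$ is regular Noetherian of dimension $d$, contains the characteristic zero field $\ell$, and all residue field extensions $\ell \hookrightarrow \kappa(\fp)$ are automatically separable, one expects formal smoothness of $\ell \to R$. The difficulty is that $R = A_f$ is a localization of a complete local ring rather than a ring of finite type over $\ell$, so Jacobian-type criteria do not apply off the shelf. I would argue locally: at each maximal ideal $\fp$ of $R$, show that $\wh{R_\fp} \cong L[[T_1,\ldots,T_d]]$ for a finite (hence separable) extension $L/\ell$ using Cohen's structure theorem for $\wh{R_\fp}$ together with the embedding of $\ell$, and then descend projectivity of $\Omega_{R/\ell}$ from these completions by faithful flatness.

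\textbf{Step 2: Associated graded and lifting.} Equip $D(R)$ with the order filtration $D^n(R) = \{P \in D(R) \mid [P,r] \in D^{n-1}(R) \text{ for all } r \in R\}$, $D^0(R) = R$. The principal symbol yields an $R$-algebra map $\sigma \colon \operatorname{gr} D(R) \to \operatorname{Sym}_R(\g)$, which is an isomorphism once $R$ is $\ell$-smooth, by the line of argument of \cite[Chapter 3, Proposition 1.8]{B}. Hence $\operatorname{gr} D(R) \cong \operatorname{Sym}_R(\g)$, which is a commutative Noetherian regular ring of Krull dimension $2d$, being the symmetric algebra of a projective module of rank $d$ over a regular Noetherian ring. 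The classical filtered-ring theorems \cite[Chapter 2]{B} then imply that $D(R)$ inherits both-sided Noetherianness and finiteness of global dimension from its commutative Noetherian associated graded. To pin down the global dimension as exactly $d$, I would construct a Spencer-type left $D(R)$-resolution
\[
0 \to D(R) \otimes_R \wedge^d \g \to \cdots \to D(R) \otimes_R \g \to D(R) \to R \to 0,
\]
exhibiting $R$ as a left $D(R)$-module of projective dimension exactly $d$, and deduce the stated global dimension as in the treatment for convergent power series and affine varieties in \cite[Chapter 3]{B}.

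\textbf{Main obstacle.} Step 1 is where the real work lies. The subfield $\ell \cong k((X))$ is forced into existence only after inverting $f$ (see \ref{subfield}), is not the coefficient field of $A$, and $R$ is not essentially of finite type over $\ell$. Standard smoothness criteria therefore do not apply mechanically, and a careful local analysis via completion at closed points of $\Spec R$ and faithfully flat descent is needed to establish projectivity of $\Omega_{R/\ell}$ over this rather transcendentally-large base field. Once that input is secured, Step 2 proceeds as a routine adaptation of the Weyl-algebra and power-series arguments listed in the introduction.
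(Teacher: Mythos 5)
Your overall architecture is reasonable and you correctly identify where the real difficulty lies, but as written the proposal has a genuine gap precisely at that point, together with two inaccuracies worth flagging.

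The gap is in Step 1. First, $\Omega_{R/\ell}$ is \emph{not} a finitely generated $R$-module here ($R$ is a localization of a complete local ring, not essentially of finite type over $\ell$; already $\Omega_{k[[x]]/k}$ is not finitely generated), so the object you must control is $\g=\Der_\ell(R)$ directly, and ``projective of rank $d$'' for $\g$ is not formally equivalent to a statement about $\Omega_{R/\ell}$. More seriously, your plan to ``descend projectivity from the completions by faithfully flat descent'' presupposes that $\g$ is finitely generated (or at least finitely presented), which is exactly what has to be proved; and knowing $\wh{R_\n}\cong L[[T_1,\ldots,T_d]]$ with $L/\ell$ finite does not by itself produce $\ell$-linear derivations of $R$ or of $R_\n$, because a derivation of the completion need not preserve $R_\n$, and the natural map $\Der_\ell(R)_\n\to\Der_\ell(R_\n)$ is not obviously surjective. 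The paper closes this gap by a concrete construction: it first shows $\Der_{k[[f]]}(A)$ is a finitely generated $A$-module of rank $\geq d$ (Proposition \ref{rank-d-A} plus the exact sequence for the tower $k\subseteq k[[f]]\subseteq A$), then, for each maximal ideal $\n=\q R$, chooses parameters $y_1,\ldots,y_d$ as in Lemma \ref{choose-elts} so that $A$ is finite over $V=k[[f,y_1,\ldots,y_d]]$, checks that the partial-derivative operators $\delta_i$ of $\wh{A_\q}$ preserve $V$ (Claim 4 of Theorem \ref{chief}) and hence, after clearing finitely many denominators, land in $\Der_{k[[f]]}(A)$; this yields $\Der_\ell(R)=\Der_{k[[f]]}(A)_f$, finite generation, and $\Der_\ell(R)_\n=\Der_\ell(R_\n)$ free of rank $d$. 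Some argument of this kind is indispensable, and your sketch does not contain it.

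On Step 2 you take a genuinely different route from the paper: the paper does not work with $\operatorname{gr}D(R)$ at all, but instead verifies the hypotheses of Bj\"ork's criterion \cite[Chapter 2, Theorem 1.2]{B} (Theorem \ref{B-Th}): uniform embedding dimension $d$ at all maximal ideals (Proposition \ref{max}), residue fields algebraic over $\ell$ (Lemma \ref{field-extn}), and the maximal-rank condition, which follows from the explicit $\delta_i$ with $\delta_i(z_j)=\delta_{ij}$ after clearing denominators; condition (4) of that theorem is bypassed by the independent proof that $\g$ is finitely generated (Remark \ref{sutli}). Your filtered-ring route can be made to work, but note two points: (i) the identification $\operatorname{gr}D(R)\cong\operatorname{Sym}_R(\g)$ already requires the same local data (a dual basis of derivations to a regular system of parameters at each maximal ideal); and (ii) regularity of $\operatorname{Sym}_R(\g)$ only gives $\operatorname{gldim}D(R)\leq 2d$, and the Spencer complex only gives $\operatorname{projdim}_{D(R)}R\leq d$, so pinning the global dimension to exactly $d$ requires the finer arguments of \cite[Chapter 2]{B} --- which is what the paper's citation of Theorem \ref{B-Th} delivers in one stroke. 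In short, your Step 2 amounts to re-deriving the cited theorem under the same hypotheses the paper verifies, while your Step 1 omits the construction that makes those hypotheses checkable.
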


\s \label{app} \textit{Application:} Lyubeznik conjectured, see \cite{Lyu-3},  that if $S$ is a regular ring and $I$ is an ideal in $S$ then for any $i \geq 0$ the set $\Ass_S H^i_I(S)$ is finite. This conjecture is known to be true in the following cases:
\begin{enumerate}
\item
$S$ contains a field $K$  with $\charp K = p > 0$, see \cite{HuSh}.
\item
$S$ is local  and containing a field $K$ with $\charp K = 0$, see \cite{Lyu-1}.
\item
$S$ is a regular affine $K$-algebra (here $\charp K = 0$), see \cite{Lyu-1}.
\item
$S$ is an unramifed regular local ring, see \cite{Lyu-4}.
\item
$S$ is a smooth $\mathbb{Z}$-algebra, see \cite{BB}. 
\end{enumerate}
However none of the techniques used to prove the above results can be used to verify Lyubeznik's conjecture for rings $R$ as given in \ref{Hypothesis}.  We show that Theorem \ref{main} and an affirmative answer to a question regarding Bernstein-Sato polynomials 
of a formal power series (see \ref{bs} and \ref{bs-K})  enables us  to verify Lyubeznik's conjecture in this case.

Here is an overview of the contents of the paper. In section two we discuss some preliminaries that we need. In section three we discuss our result on ranks of certain modules of derivations. The main result in this section is Theorem \ref{chief}. We prove Theorem \ref{main} in section four. In the next section we give an application of our result to Lyubeznik's conjecture. Finally in section we give bountiful number of examples of regular rings satisfying our hypothesis \ref{Hypothesis}. 

\section{Some preliminaries}
Let $(A,\m)$ be a complete equicharacteristic Noetherian domain of dimension $d + 1 \geq 2$. Assume $k = A/\m$ has characteristic zero and that $A$ is not a regular local ring. Let  $\Sing(A)$  the singular locus of $A$ be defined by an ideal $J$ in $A$. Note $J \neq 0$. Let $ f \in J$ with $f \neq 0$. Set $R = A_f$. Then $R$ is a regular domain of dimension $d$.  In this section we prove some preliminary facts about $R$ and $A$. We note that $A$ contains a field isomorphic to $k$. For convenience we also denote it with $k$.

We first prove
\begin{proposition}\label{max}[ with hypotheses as above.]
Let $\n$ be a maximal ideal in $R$. Then $\n = \q R$ where $\q$ is a prime ideal of height $d$ in $A$. In particular $\dim R = d$. 
\end{proposition}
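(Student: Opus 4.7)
By the standard bijection between primes of $R=A_f$ and primes of $A$ not containing $f$, any maximal ideal $\n$ of $R$ has the form $\n=\q R$ where $\q$ is a prime of $A$ with $f\notin\q$ that is maximal among such primes (i.e.\ every prime $\q'$ of $A$ properly containing $\q$ must contain $f$). I first record the easy inequality $\height(\q)\le d$: since $A$ is not regular, $J\sub\m$, so $f\in\m$; hence $\q\subsetneq\m$ and $\height(\q)\le\height(\m)-1=d$.

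The substantive content is the reverse inequality $\height(\q)\ge d$, equivalently $\dim(A/\q)\le 1$ (equivalent because $A$ is a complete, hence catenary, local domain, so $\height(\q)+\dim(A/\q)=d+1$). Suppose for contradiction that $B:=A/\q$ has dimension $\ge 2$. To derive a contradiction it suffices to find a prime $\q'$ of $A$ with $\q\subsetneq\q'\subsetneq\m$ and $f\notin\q'$, for then $\q R\subsetneq\q' R\subsetneq R$ contradicts the maximality of $\q R$. Equivalently, I must produce a nonzero, non-maximal prime $P$ of $B$ with $\bar f\notin P$, where $\bar f$ denotes the (nonzero) image of $f$ in $B$.

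For this I would appeal to the following lemma: every Noetherian local domain of dimension $\ge 2$ has infinitely many height-one primes. (Sketch: if $p_1,\ldots,p_n$ exhausted them, Krull's principal ideal theorem would put every nonzero non-unit of $B$ into some $p_i$, whence the maximal ideal $\m_B$ would be contained in $p_1\cup\cdots\cup p_n$; prime avoidance would then force $\m_B\sub p_i$ for some $i$, contradicting $\height(p_i)=1<\dim B$.) Since only finitely many of these height-one primes can be minimal over $(\bar f)$, I may choose a height-one prime $P$ of $B$ with $\bar f\notin P$; as $\dim B\ge 2$, such $P$ is automatically nonzero and non-maximal. Pulling $P$ back to $A$ gives the desired $\q'$, completing the contradiction. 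This lemma on the infinitude of height-one primes is the only nontrivial input; everything else is bookkeeping.

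Finally, for the assertion $\dim R=d$: any chain of primes of $R$ pulls back to a chain in $A$ of primes avoiding $\m$ (since $f\in\m$), hence of length $\le d$, so $\dim R\le d$. Conversely, fix any height-$d$ prime $\q$ of $A$ with $f\notin\q$ (such a $\q$ exists by the main claim applied to any maximal ideal of $R$); then a saturated chain $0\subsetneq\q_1\subsetneq\cdots\subsetneq\q_d=\q$ in $A$ (produced using catenarity) avoids $f$ throughout and yields a chain in $R$ of length $d$, so $\dim R\ge d$.
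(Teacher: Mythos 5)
Your proposal is correct and follows essentially the same route as the paper: assume $\height\q\le d-1$, use catenarity of the complete local domain $A$ to get $\dim A/\q\ge 2$, invoke the existence of infinitely many height-one primes of $A/\q$ to find one avoiding $\bar f$, and pull back to contradict maximality of $\q R$. You merely supply more detail than the paper (the prime-avoidance sketch of the "elementary" lemma and the explicit deduction of $\dim R=d$), but the underlying argument is identical.
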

\begin{proof}
Note $f \notin \q$. Suppose if possible $\height \q \leq d-1$. As $A$ is complete it is catenary. So $\height(\m / \q) \geq 2$. In particular $\dim A/\q \geq 2$. The image of $f$ is non-zero in $A/\q$.  It is elementary to see that $A/\q$ has infinitely many prime ideals of height one. We can choose one, say $\ov{P} = P/\q$ not containing $\ov{f}$. Thus $P$ is a prime ideal in $A$ not containing $f$ and $P$ strictly contains $\q$. It follows that $\n =  \q R$ is
not a maximal ideal of $R$, a contradiction.
\end{proof}

\s \label{subfield} Consider the map $\phi \colon k[[X]] \rt A$ which maps $k$ identically to $k$ and $X$ to $f$. As $A$ is a domain it is clear that $\phi $ is an injective map.
Inverting $X$ we get a map $\psi \colon k((X)) \rt A_X$. It is clear that $A_X = A_f = R$. Thus $R$ naturally contains a field $\ell \cong k((X))$. We also note that
$\image \phi = k[[f]]$ and $\ell = k((f))$.

The following is a crucial ingredient to prove Theorem \ref{main}.
\begin{lemma}\label{field-extn}
Let $\n$ be a maximal ideal of $R$. Then $R/\n$ is a finite extension of $\ell$.
\end{lemma}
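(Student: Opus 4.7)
The strategy is to identify $R/\n$ explicitly as a Laurent series field over a finite extension of $k$, and then apply the standard finiteness criterion for local homomorphisms of complete Noetherian local rings. First, by Proposition \ref{max}, $\n = \q R$ where $\q$ has height $d$; since $A$ is a complete local domain it is catenary, so $\dim A/\q = 1$. Write $B = A/\q$ and let $\bar{f} \in B$ denote the image of $f$. This image is nonzero (since $f \notin \q$) and lies in the maximal ideal of $B$ (since $f \in J \subseteq \m$). Consequently $(0)$ is the only prime of $B$ missing the powers of $\bar{f}$, so $R/\n = B_{\bar{f}}$ coincides with the fraction field $F$ of $B$.

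Next I would pass to the normalization. Let $V$ denote the integral closure of $B$ in $F$. Since $A$, and hence $B$, is a complete Noetherian local ring, Nagata's theorem asserts it is a Nagata (universally Japanese) ring, so $V$ is module-finite over $B$. A domain that is finite over a complete local ring is itself a complete local ring, and being $1$-dimensional and integrally closed, $V$ is therefore a complete DVR. By Cohen's structure theorem $V \cong K[[t]]$, where $K$ is the residue field of $V$ and is a finite extension of $k$ (since $V/\m_V$ is finite over $B/\m_B = k$). Hence $F = K((t))$.

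Finally, I would compare $V = K[[t]]$ with its subring $k[[\bar{f}]]$, which by \ref{subfield} is the image of $k[[X]]$ under $X \mapsto \bar{f}$. In $V$ we may write $\bar{f} = u t^n$ with $u \in V^{\times}$ and $n \geq 1$; thus $\bar{f} V = (t^n)$ is primary to the maximal ideal of $V$, and $V/\bar{f} V \cong K[[t]]/(t^n)$ is a $k$-vector space of dimension $n[K:k] < \infty$. The standard finiteness criterion for local homomorphisms of complete Noetherian local rings (e.g.\ Matsumura, \emph{Commutative Ring Theory}, Theorem 8.4) then forces $V$ to be a finitely generated $k[[X]]$-module. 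Inverting $X$ yields that $K((t)) = V_{\bar{f}}$ is finite over $\ell = k((X))$, and therefore $R/\n = F = K((t))$ is finite over $\ell$. The main technical step is the invocation of this finiteness criterion on the extension $k[[X]] \subseteq K[[t]]$; the remaining work is essentially a structural unpacking of $B$ via the Nagata property and Cohen's structure theorem.
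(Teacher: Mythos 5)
Your proof is correct and its essential engine is the same as the paper's: the finiteness criterion of Matsumura, Theorem 8.4, applied to the map $k[[X]] \to A/\q$, $X \mapsto \bar{f}$, using that the fiber $A/(\q,f)$ is finite over $k$ and that the target is separated for the $(X)$-adic topology. The detour through the normalization $V$ and Cohen's structure theorem is harmless but unnecessary --- the paper applies the criterion directly to $T = A/\q$ itself (checking separatedness via $\bigcap_n X^nT \subseteq \bigcap_n \m^n T = 0$) and then passes to quotient fields, which avoids invoking the Nagata property altogether; your route does yield the extra structural fact that $R/\n \cong K((t))$ for a finite extension $K$ of $k$, which the lemma does not require.
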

\begin{proof}
By Proposition \ref{max} we get that $\n = \q R$ where $\q$ is a prime ideal of height $d$ in $A$ not containing $f$. The map $\phi \colon k[[X]] \rt A$  as in \ref{subfield}
descends to a map $\ov{\phi} \colon k[[X]] \rt A/\q$.  Set $T = A/\q$ and $S = k[[X]]$.  As $(\q , f)$ is $\m$-primary in $A$ we get that $T/XT = A/(\q, f)$ is a finite dimensional $k$-vector space. We also get 
\[
\bigcap_{n\geq 1} X^n T \subseteq \bigcap_{n\geq 1} f^n T  \subseteq \bigcap_{n \geq 1} \m^n T = 0.
\] 
Thus $T$ is seperated \wrt \ $(X)$-topology of $S$. It follows that $T$ is a finite $S$-module, see  \cite[Theorem 8.4]{Mat}. Therefore the quotient field of $A/\q$ will be a finite extension of quotient field of $S$. The  result follows.
\end{proof}
We will use the next result in the next section.
\begin{lemma} \label{choose-elts}[ with hypotheses as above:]
Let  $\q$ be a prime of height $d$ in $A$ such that $f \notin \q$.  Let $\kappa(\q)$ be the residue field of $A_\q$. Then there exists $y_1,\ldots, y_d \in \q$ such that
\begin{enumerate}[\rm (1)]
\item
$\height (f,y_1,\ldots, y_j)  = j+1$ for $j = 0, \ldots, d$.
\item 
The images of $y_1,\ldots, y_j$ in the $\kappa(\q)$-vector space $\q A_\q / \q^2 A_\q$ is linearly independent for $j = 1,\ldots, d$.
\item
$f, y_1,\ldots, y_d$ is a system of parameters of $A$.
\item
$(y_1,\ldots, y_d)A_\q  = \q A_\q$.
\end{enumerate}
\end{lemma}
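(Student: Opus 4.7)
The plan is to construct $y_1,\dots,y_d$ by induction on $j$, using prime avoidance at each step to simultaneously control the height condition in (1) and the linear independence condition in (2); properties (3) and (4) will then follow as formal consequences.

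The first observation to set up is that $A_\q$ is a regular local ring of dimension $d$: it is a localization of $R = A_f$ (since $f \notin \q$), and $R$ is regular of dimension $d$ by hypothesis, while $\dim A_\q = \height \q = d$. In particular $V := \q A_\q/\q^2 A_\q$ is a $\kappa(\q)$-vector space of dimension $d$, and by Nakayama any $d$ elements of $\q$ whose images form a basis of $V$ already generate $\q A_\q$, which immediately gives (4). For the base case $j = 0$ of (1), the ideal $(f)$ has height $1$ because $A$ is a domain and $f \neq 0$.

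For the inductive step, suppose $y_1,\dots,y_{j-1} \in \q$ with $1 \le j \le d$ have been chosen to satisfy (1) and (2). Let $P_1,\dots,P_r$ be the minimal primes of $(f,y_1,\dots,y_{j-1})$; by Krull and the inductive hypothesis each has height exactly $j$. Since $f \in P_i$ but $f \notin \q$, we cannot have $\q \subseteq P_i$ (this would force $\q = P_i$ for dimension reasons). Let $\Lambda_j \subseteq A$ denote the preimage under $A \to A_\q$ of the $A_\q$-ideal $(y_1,\dots,y_{j-1})A_\q + \q^2 A_\q$. Since the images of $y_1,\dots,y_{j-1}$ span a $\kappa(\q)$-subspace of $V$ of dimension $j-1 < d$, Nakayama's lemma shows that $(y_1,\dots,y_{j-1})A_\q + \q^2 A_\q \subsetneq \q A_\q$, hence $\q \not\subseteq \Lambda_j$. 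Now the standard version of prime avoidance that permits one non-prime ideal in the list applies: $\q$ is not contained in $\Lambda_j \cup P_1 \cup \cdots \cup P_r$, so we may pick $y_j \in \q$ lying outside all of these. By construction the image of $y_j$ in $V$ is not in the span of $\ov{y_1},\dots,\ov{y_{j-1}}$, giving (2) at step $j$; and since $y_j$ avoids every $P_i$, every minimal prime of $(f,y_1,\dots,y_j)$ strictly contains some $P_i$ and therefore has height $j+1$, giving (1).

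With $y_1,\dots,y_d$ constructed, (4) follows from (2) at $j=d$ by the Nakayama observation above, and (3) follows from (1) at $j=d$: the ideal $(f,y_1,\dots,y_d)$ has height $d+1 = \dim A$, so it is $\m$-primary, making $f,y_1,\dots,y_d$ a system of parameters. The only delicate point is the simultaneous application of prime avoidance to the prime ideals $P_i$ together with the ideal $\Lambda_j$, which need not be prime; this is handled by the form of prime avoidance allowing up to two non-prime members in the list, which is comfortable here since $\Lambda_j$ is the only such member.
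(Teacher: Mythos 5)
Your proof is correct and follows essentially the same route as the paper's: an inductive construction using prime avoidance against the minimal primes of $(f,y_1,\ldots,y_{j-1})$ together with the contraction of $(y_1,\ldots,y_{j-1})A_\q+\q^2A_\q$, with (3) and (4) deduced exactly as in the paper. The only (harmless) cosmetic difference is that you unify the paper's two cases $j\le d-2$ and $j=d-1$ into a single argument via $f\in P_i$ but $f\notin\q$.
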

\begin{proof}
(1)  and (2). As $A$ is a domain we get that $\height (f) = 1$. Now
suppose $y_1,\ldots, y_j$ is already chosen where $0 \leq j < d$. We choose $y_{j+1}$ as follows: \\
(a) Let $P_1,\ldots, P_s$ be all the minimal  primes of $(f,y_1,\ldots,y_j)$ of height $j+1$.  We claim that $\q \nsubseteq P_i$ for all $i = 1,\ldots, s$. We have to consider two cases: \\
case (i) : $j \leq d-2$. Then as $\height P_i < d$  for all $i$, we get the result. \\
case(ii): $j = d -1$. If $\q \subseteq P_i$ for some $i$ then as both these prime ideals have height $d$ we get $\q = P_i$.  We then get $f \in \q$, a contradiction. 

(b) Set
$$J  = \left((y_1,\ldots, y_j)A_\q + \q^2A_\q\right)\cap A. $$
Then $J \subseteq \q$. We claim that $\q \nsubseteq J$. If this is so we get $\q = J$ and therefore
\[
\q A_\q = (y_1,\ldots, y_j)A_\q + \q^2 A_\q \quad \text{and so by Nakayama's Lemma} \  \q A_\q = (y_1,\ldots, y_j)A_\q.
\]
This implies that $\dim A_\q \leq j <d$, a contradiction.

By prime avoidance there exists
\[
y_{j+1} \in \q \setminus J \cup \left( \cup_{i =1}^{s}P_i\right).
\]
Then note that $y_1,\ldots,y_{j+1}$ satisfies the conditions of (1) and (2).

(3) This follows since by (1) we have $\height (f, y_1, \ldots, y_d) = d+1 = \dim A$.

(4) As $A_\q$ is a regular local ring of dimension $d$ we get that $\q A_\q/\q^2 A_\q$ is a $d$-dimensional $\kappa(\q)$-vector space. The result follows from (2).
\end{proof}
\section{ranks of modules of derivations}
Let $T, S$ be  commutative  Noetherian rings.  Assume  $S$ is a $T$-algebra. Let $\Der_T(S)$ denote the set of $T$-linear derivations on $S$. The $S$-module $\Der_T(S)$ need not be finitely generated. However there are many natural instances where it is so.

\s \label{setup} Our setup in this section will be as in \ref{Hypothesis}.  Let us recall it here.  Let $(A,\m)$ be a complete equicharacteristic Noetherian domain of dimension $d + 1 \geq 2$. Assume $k = A/\m$ has characteristic zero and that $A$ is not a regular local ring. Let  $\Sing(A)$  the singular locus of $A$ be defined by an ideal $J$ in $A$. Note $J \neq 0$. Let $ f \in J$ with $f \neq 0$. Set $R = A_f$. Then $R$ is a regular domain of dimension $d$.  We note that $A$ contains a field isomorphic to $k$. For convenience we also denote it with $k$. By \ref{subfield} $R$ contains a field $\ell$ isomorphic to $k((X))$. Furthermore  by \ref{field-extn}  if $\n$ is a maximal ideal of $R$ then the field $R/\n$ is a finite extension of $\ell$. Also if $\n = \q R$ is a maximal ideal of $R$ with $\q$ a prime ideal in $A$ then $\height \q = d$, see \ref{max}.

We first prove:
\begin{proposition}\label{rank-d-A}
[ with hypotheses as in \ref{setup}.]  The $A$-module $\Der_k(A)$ is finitely generated with rank $= d+1$.
\end{proposition}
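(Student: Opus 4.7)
The plan is to present $A$ as a quotient of a formal power series ring, identify $\Der_k(A)$ with the kernel of an explicit Jacobian matrix acting on $A^n$, and then compute its rank by applying the Jacobian criterion at the generic point.

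First, since $A$ is complete equicharacteristic with residue field $k$ of characteristic zero, Cohen's structure theorem gives a surjection $\pi \colon P \twoheadrightarrow A$ with $P = k[[y_1,\ldots,y_n]]$ and $n = \dim_k \m/\m^2$ the embedding dimension. Let $I = \ker \pi$; this is a prime ideal of $P$ of height $n - (d+1)$, since $A$ is a domain of dimension $d+1$ and $P$ is Cohen-Macaulay of dimension $n$. Fix generators $I = (g_1,\ldots,g_r)$ and let $\mathcal{J} = [\partial g_j/\partial y_i]$, viewed as a matrix with entries in $A$ after reducing modulo $I$.

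Next I would establish an $A$-linear isomorphism
\[
\Der_k(A) \;\cong\; \ker\bigl(A^n \xrightarrow{\mathcal{J}} A^r\bigr).
\]
The crucial point is that any $D \in \Der_k(A)$ is automatically $\m$-adically continuous, since the Leibniz rule forces $D(\m^j) \subseteq \m^{j-1}$. Hence the composite $\tilde D := D \circ \pi \colon P \to A$ is a continuous $k$-derivation that kills $I$. Any such continuous $\tilde D$ is uniquely determined by the tuple $\bigl(\tilde D(y_1),\ldots,\tilde D(y_n)\bigr) \in A^n$ via termwise differentiation of power series (the resulting series converging $\m$-adically because $A$ is complete), and the condition $\tilde D(I)=0$ reduces, via the derivation property and $g_j \equiv 0$ in $A$, to $\tilde D(g_j) = 0$ for each generator, equivalently $\mathcal{J} \cdot (\tilde D(y_i))_i = 0$. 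Noetherianity of $A$ then gives that $\Der_k(A)$ is a finitely generated $A$-module.

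Finally I would compute the rank by tensoring the above isomorphism with $L := \text{Frac}(A)$, which preserves the kernel since $L$ is flat over $A$. The generic localization $A_{(0)} = L$ is a field, hence regular, so the Jacobian criterion in the complete equicharacteristic zero setting yields $\rank_L(\mathcal{J} \otimes_A L) = \height(I) = n - (d+1)$. Consequently, $\Der_k(A) \otimes_A L$ has $L$-dimension $n - (n - (d+1)) = d+1$, proving $\rank_A \Der_k(A) = d+1$. The main obstacle is the clean identification $\Der_k(A) \cong \ker \mathcal{J}$ (which hinges on automatic $\m$-adic continuity of $k$-derivations) and the invocation of the Jacobian criterion at the generic point; both rely essentially on the characteristic-zero hypothesis, which makes any Noether normalization $k[[x_1,\ldots,x_{d+1}]] \hookrightarrow A$ generically separable and hence makes the Jacobian criterion sharp at the generic point.
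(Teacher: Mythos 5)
Your proposal is correct and is essentially the paper's own argument: the paper also presents $A$ as $Q/\q$ with $Q=k[[x_1,\ldots,x_n]]$, invokes the conormal exact sequence $0 \rt \Der_k(A) \rt A^n \rt \Hom_A(\q/\q^2,A)$ (which is precisely your identification of $\Der_k(A)$ with the kernel of the Jacobian matrix), and localizes at the generic point. The only cosmetic difference is at the last step: you pin down the rank exactly via the Jacobian criterion at the generic point, whereas the paper only extracts the lower bound $\rank \geq n-r$ from the localized sequence and combines it with the upper bound $\rank \Der_k(A) \leq d+1$ from \cite[Theorem 30.7]{Mat}; both routes are valid in the equicharacteristic zero setting.
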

\begin{proof}
By \cite[Theorem 30.7]{Mat},   $\Der_k(A)$ is a finitely generated $A$-module of rank $\leq d + 1$. 
Let $A  = Q/\q$ where $Q = k[[x_1,\ldots, x_n]] $ and $\q \subseteq (x_1,\ldots, x_n)^2$ is a prime ideal in $Q$.  Let $r = \height \q$. Then $n = d+1+r$. 

Let $T$ be a finitely generated $A$-module. By  equation (6) in Theorem 25.2 of  \cite{Mat} we get an exact sequence of $A$-modules
\[
0 \rt \Der_k(A, T) \rt \Der_k(Q, T) \rt \Hom_A(\q/\q^2, T).
\]
We note that $\Der_k(Q, T) \cong T^n$. Set $T = A$ in the above equation
\[
0 \rt \Der_k(A)  \rt A^n \rt \Hom_A(\q/\q^2, A).
\]
We localize the above equation at $(0)$. We note that $(\q/\q^2)_{(0)}  \cong \q Q_\q/\q^2 Q_\q  \cong \kappa(\q)^r$, here $\kappa(\q)$ is the residue field of $Q_\q$ (this is so as $Q_\q$ is a regular local ring of dimension $r$).  Note $\kappa(\q)$ is also the quotient field of $A$.
So we have an exact sequence
\[
0 \rt \Der_k(A)_{(0)} \rt \kappa(\q)^n \rt \kappa(\q)^r.
\]
Therefore $\rank \Der_k(A) \geq n -r = d+1$. The result follows.
\end{proof}
The following  is the main result of this section:
\begin{theorem}\label{chief}
(with hypotheses as in \ref{setup}.)
Let $T$ be the subring $k[[f]]$ of $A$. Consider $\Der_T(A)$. Then 
\begin{enumerate}[\rm (1)]
\item
$\Der_T(A)$ is a finitely generated $A$-module and 
$\rank \Der_T(A) \geq d$.
\item
$\Der_T(A)_f  = \Der_\ell(R)$. In  particular $\Der_\ell(R)$ is finitely generated as a $R$-module.
\item
Let $\n$ be a maximal ideal of $R$.  Then
\begin{enumerate}[\rm(a)]
\item
$\Der_\ell(R_\n) = \left(\Der_\ell(R)\right)_\n. $
\item
$\Der_\ell(R_\n)$ is free $R_\n$-module of rank $d$.
\end{enumerate}
\item
$\Der_\ell(R)$ is a projective $R$-module of rank $d$.
\end{enumerate}
\end{theorem}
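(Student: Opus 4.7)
I handle the four parts in sequence; the main work is part (3)(b).

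For (1), since $k \subseteq T$, the $A$-module $\Der_T(A)$ is an $A$-submodule of the finitely generated $A$-module $\Der_k(A)$ (Proposition \ref{rank-d-A}), hence itself finitely generated. As $T = k[[f]]$ is topologically generated by $f$ over $k$, a $k$-linear derivation is $T$-linear if and only if it kills $f$; hence
$$0 \to \Der_T(A) \to \Der_k(A) \xrightarrow{D \mapsto D(f)} A$$
is exact, so $\rank \Der_T(A) \geq \rank \Der_k(A) - 1 = d$. Parts (2) and (3)(a) both instantiate the standard fact that derivations commute with localization: for a multiplicative set $W \subseteq A$ and a subring $K \subseteq A$, the natural map $\Der_K(A) \otimes_A W^{-1}A \to \Der_K(W^{-1}A)$ is an isomorphism, and the target equals $\Der_{W^{-1}K}(W^{-1}A)$ by the quotient rule. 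Applied with $W = \{f^n\}$ and $K = T$ (so that $T_f = \ell$), this yields (2); applied to $R$ at a maximal ideal, it yields (3)(a). Finite generation of $\Der_\ell(R)$ over $R$ then follows from (1).

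For the main part (3)(b), fix $\mathfrak{n} = \mathfrak{q} R$ and set $S := R_\mathfrak{n} = A_\mathfrak{q}$, a regular local ring of dimension $d$ whose residue field $\kappa(\mathfrak{q})$ is a finite (hence separable) extension of $\ell$ by Lemma \ref{field-extn}. Using Lemma \ref{choose-elts}, choose $y_1, \ldots, y_d \in \mathfrak{q}$ with $(y_1, \ldots, y_d)S = \mathfrak{q} S$ and such that $f, y_1, \ldots, y_d$ is a system of parameters of $A$. By Cohen's structure theorem the complete ring $A$ is module-finite over $B := k[[f, y_1, \ldots, y_d]] = T[[y_1, \ldots, y_d]]$; set $\mathfrak{q}' := (y_1, \ldots, y_d)B$, a height-$d$ prime with $\kappa(\mathfrak{q}') = \ell$.

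The key claim is that $B_{\mathfrak{q}'} \to S$ is \'etale: it is unramified by $\mathfrak{q}' S = \mathfrak{q} S$ (Lemma \ref{choose-elts}(4)) together with the separability of $\kappa(\mathfrak{q})/\ell$, and flat by Matsumura's ``miracle flatness'' applied to this local map between regular local rings of the same dimension $d$. Consequently $\Omega^1_{S/B_{\mathfrak{q}'}} = 0$, and the cotangent exact sequence collapses to $\Omega^1_{S/\ell} \cong \Omega^1_{B_{\mathfrak{q}'}/\ell} \otimes_{B_{\mathfrak{q}'}} S$; dualizing, $\Der_\ell(S) \cong \Der_\ell(B_{\mathfrak{q}'}, S)$. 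To pin down the latter, note that the formal partials $\partial_i := \partial/\partial y_i$ on $B = T[[y_1, \ldots, y_d]]$ are $T$-linear and satisfy $\partial_i(y_j) = \delta_{ij}$. For any $D \in \Der_\ell(B_{\mathfrak{q}'}, S)$, the difference $D - \sum_i D(y_i)\partial_i$ is $\ell$-linear, kills every $y_i$, and therefore kills the polynomial subring $\ell[y_1, \ldots, y_d] \subseteq B_{\mathfrak{q}'}$. This subring is dense in $B_{\mathfrak{q}'}$ in the $\mathfrak{q}'$-adic topology, and any such derivation sends $(\mathfrak{q}')^n$ into $\mathfrak{q}^{n-1} S$, so Krull's intersection theorem in $S$ forces the difference to vanish. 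Hence $\Der_\ell(B_{\mathfrak{q}'}, S) \cong S^d$ with basis $\partial_1, \ldots, \partial_d$, which gives (3)(b). Part (4) is then immediate: $\Der_\ell(R)$ is finitely generated by (2) and locally free of rank $d$ at every maximal ideal by (3), hence projective of rank $d$.

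The main obstacle lies in (3)(b): verifying \'etaleness of $B_{\mathfrak{q}'} \to S$ (especially flatness, via miracle flatness) and then running the density/continuity argument that computes $\Der_\ell(B_{\mathfrak{q}'}, S)$ via the formal partials.
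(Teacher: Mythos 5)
Your parts (1), (3)(b) and (4) are essentially sound, and your route to (3)(b) --- \'etaleness of $B_{\q'}\to A_\q$ via miracle flatness and unramifiedness from $\q'A_\q=\q A_\q$ plus separability, followed by computing $\Der_\ell(B_{\q'},A_\q)$ with the formal partials and a $\q'$-adic density/continuity argument --- is a genuinely different and arguably cleaner packaging than the paper's, which instead pins down $\Der_\ell(R_\n)$ by combining the upper bound from Matsumura's Theorem 30.7 with the lower bound inherited from part (1), and produces the dual basis $\delta_i$ from Theorem 30.6. The real problem is your treatment of (2) and (3)(a). The statement you invoke --- that $\Der_K(A)\otimes_A W^{-1}A \to \Der_K(W^{-1}A)$ is an isomorphism for an arbitrary subring $K$ and multiplicative set $W$ --- is not a standard fact in this setting. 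It holds when $\Omega_{A/K}$ is finitely presented over $A$ (so that $\Hom_A(\Omega_{A/K},-)$ commutes with localization), but here $A$ is a complete local ring and $T=k[[f]]$, so $A$ is not essentially of finite type over $T$ and $\Omega_{A/T}$ is not finitely generated (already $\Omega_{k[[x]]/k}$ is not). Injectivity is easy; the entire difficulty is surjectivity, i.e.\ that a derivation of the localization, after multiplication by a single element of $W$, carries $A$ into $A$. That is precisely what the paper's Claims 3 and 4 establish by hand: the generators $\delta_i$ of $\Der_\ell(A_\q)$ satisfy $\delta_i(V)\subseteq V$ for $V=k[[f,y_1,\ldots,y_d]]$ (a continuity argument using \ref{f1} and \ref{f2}), whence $s_i\delta_i\in\Der_T(A)$ for suitable $s_i\notin\q$ because $A$ is module-finite over $V$; the global equality $\Der_T(A)_f=\Der_\ell(R)$ is then obtained by comparing localizations at every maximal ideal, not by a one-step localization over $T$.

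The good news is that your own construction in (3)(b) already contains the missing ingredient: your $\partial_i$ are honest derivations of $B=T[[y_1,\ldots,y_d]]$ into itself, so once they extend uniquely to $A_\q$ by \'etaleness you can write $A=Ba_1+\cdots+Ba_c$, note that each $\tilde\partial_i(a_j)$ lies in $A_\q$, and clear the finitely many denominators to obtain elements of $\Der_T(A)$ generating $\Der_\ell(A_\q)$ after localization. Carrying this out at every maximal ideal and then gluing is what (2) and (3)(a) actually require; as written, your proposal asserts the conclusion of this step as if it were a citable lemma.
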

We will need the following two easily proved facts:
\s \label{f1} \emph{Fact 1:} Let $K$ be a field of characteristic zero and let $S = K[[X_1,\ldots, X_n]]$. Let $T$ be an $S$-module, not necessarily finitely generated, such that  $T$ is complete \wrt \   $(X_1,\ldots, X_n)$-adic topology.
Then $\Der_K(S, T) \cong T^n$.

\s \label{f2} \emph{Fact 2:} Let $R \subseteq S$ be an inclusion of Noetherian domains.  Let $I$ be an ideal in $R$ such that $R$ is complete \wrt \ $I$-adic topology. Let $J$ be an ideal in $S$ such that $S$ is complete \wrt \ $J$-adic topology. Assume $IS \subseteq J$. Let $\{r_n \}$ be a convergent sequence in $R$ (in the $I$-adic topology) with $r_n \rt r$. Then $\{ r_n \}$ considered as a sequence in $S$ is convergent in the $J$-adic topology and $\{r_ n\}$ converges to $r$ in $S$. 

We now give
\begin{proof}[Proof of Theorem \ref{chief}]
(1)  Consider the inclusion of rings $k \subseteq T \subseteq A$. By  equation (3) in Theorem 25.1 of \cite{Mat}, for any $A$-module $W$ we have the following exact sequence of $A$-modules 
\begin{equation}\label{c-e-1}
0 \rt  \Der_T(A, W) \rt  \Der_k(A, W) \rt  \Der_k(T, W).
\end{equation}
We now put $W = A$ in (\ref{c-e-1}). Notice that $T \cong k[[X]]$.  As $A$ is complete \wrt \ $\m$-adic topology it is also complete \wrt \ $(f)$-adic topology. 
So $\Der_k(T, A) \cong A$, see \ref{f1}.     
By Proposition \ref{rank-d-A} we get that  $\Der_k(A)$ is finitely generated as an $A$-module and $\rank \Der_k(A) = d +1$. The result follows from (\ref{c-e-1}).

We need some work to prove the remaining assertions:

\textit{Claim-1:}  $\Der_T(A)_f \subseteq \Der_\ell(R)$. In particular $\rank \Der_\ell(R) \geq d$.  \emph{Note we are not yet asserting that $\Der_\ell(R)$ is finitely generated as a $R$-module}.\\
Remark:  Let $L$ be the quotient field of $R$. By the rank of a not-necessarily finitely generated $R$-module $M$, we mean the cardinality of a basis of the $L$-vector space $M\otimes_R L$.\\
It is elementary that $\Der_T(A)_f \subseteq \Der_T(R)$. Now $T = k[[f]]$ and $f$ is invertible in $R$. Let $D \in \Der_T(R)$. We assert that it is $\ell = k((f))$-linear.
To see this let 
\[
v = \frac{1}{f^i}r \quad \text{for some} \ i \geq 1 \ \text{and} \ r \in R.
\]
Then $r = f^i v$. As $D$ is $T$-linear we get $D(r) = f^i D(v)$. It follows that 
\[
D(v) = \frac{1}{f^i} D(r).
\]
Any $\xi \in \ell \setminus T$ is of the form $t/f^i$where $t \in T$ and $i \geq 1$. By the previous argument we get that $D(\xi r) = \xi D(r)$ for any $r \in R$.  Thus $D$ is $\ell$-linear.

Now let $\n$ be a maximal ideal of $R$. Say $\n = \q R$ where $\q$ is a prime ideal in $A$ of height $d$ and $f \notin \q$. By Lemma \ref{choose-elts} there exists  $y_1,\ldots, y_d \in \q$ such that $f, y_1,\ldots, y_d$ is a system of parameters of $A$ and $(y_1,\ldots, y_d)A_\q = \q A_\q$. Set
\[
V = k[[f,y_1, \ldots, y_d]]  = T[[y_1,\ldots, y_d]]. \quad \text{Note} \ V \cong k[[Y_0, Y_1, \ldots, Y_d]].
\]
Also note that $A$ is finitely generated as a $V$-module.

\textit{Claim-2:} $\Der_\ell(R_\n)$ is a free $R_\n$-module of rank $d$. There also exists \\ $\delta_i \in \Der_\ell(R_\n)$ such that 
$$\delta_i(y_j) =  \begin{cases}   1 &\text{if} \ i = j, \\ 0 &\text{otherwise},   \end{cases} \quad \text{for} \ 1 \leq i, j \leq d. $$
(In particular $\delta_1,\ldots, \delta_d$ generate $\Der_\ell(R_\n)$ as a $R_\n$-module).\\

We note that $(\Der_\ell(R))_\n \subseteq \Der_\ell(R_\n) $.  By Claim 1: we get $\rank \Der_\ell(R_\n) \geq d$ as a $R_\n$-module. Using \cite[Theorem 30.7]{Mat} and Lemma \ref{field-extn}  we get that $\rank \Der_\ell(R_\n) \leq d$ as a $R_\n$-module. So $\rank \Der_\ell(R_\n)  = d$. 
Set $z_i = $ image of $y_i$ in $A_\q$.
As $R_\n = A_\q$ is regular local and $z_1,\ldots, z_d$ is a regular system of parameters of $A_\q$, by \cite[Theorem 30.6]{Mat} we get that 
There also exists $\delta_i \in \Der_\ell(R_\n)$ such that 
\begin{equation}\label{tgt}
\delta_i(z_j) =  \begin{cases}   1 &\text{if} \ i = j, \\ 0 &\text{otherwise},   \end{cases} \quad \text{for} \ 1 \leq i, j \leq d.
\end{equation}
The result follows since as  $A$ is a domain we get $A \subseteq A_\q$.

By an argument similar to that in Claim 1 we get $\Der_T(A)_\q \subseteq  \Der_\ell(A)_\q$. More is true. In fact we\\
\textit{Claim-3:} For $i = 1, \ldots, d$ there exists $D_i \in \Der_T(A) $ such that $ \delta_i =  D_i/s_i$ for some $s_i\notin \q$. In particular $\Der_T(A)_\q = \Der_\ell(A_\q)$.    \\

We prove it for $i = 1$. The argument for $i > 1$ is similar. 
Set $W = T[y_1,\ldots,y_d]$.  As $\delta_1$ is $T$-linear and $\delta_1(y_j) = 1$ if $j = 1$ and $0$ if $j > 1$ we get that restricted map  $(\delta_1)_W \in \Der_T(W) $. We note that $W \cong T[Y_1,\ldots, Y_d]$ and $(\delta_1)_W$ is usual differentiation \wrt \ $Y_1$.

We now \\
\textit{Claim-4:} $\delta_1(V) \subseteq V$.  \\
Assume the claim for the moment. Now $A$ is finitely generated as a $V$-module. Say $A = Va_1 + \cdots + Va_c$. Say  $\delta_1(a_j) = u_j/t_j$ where $a_j, t_j \in A$ and $t_j \notin \q$. Set $D_1 = s_1\delta_1$ where $s_1 = t_1\cdots t_c$. Notice $D_1(V) \subseteq A$. Also $D_1 \in \Der_\ell(A_\q)$. 
 It is clear that $D_1(A) \subseteq A$ and $D_1$ is $T$-linear. Thus $(D_1)_A \in \Der_T(A)$ and so $D_1 = s_1 \delta_1 $ in $\Der_\ell(A_\q)$. The result follows.
 
 We now give a proof of Claim 4: \\

Set $K = R/\n = \kappa(\q)$ the residue field of $A_\q$. Note that the $\q A_\q$ completion of $A_\q$ is $\widehat{A_\q} = K [[z_1,\ldots, z_d]]$. (Recall  that $z_i$ is the image of $y_i$ in $A_\q$). Furthermore $\delta_1$ extents to a $K$-linear derivation on $\widehat{A_\q}$ and it is in fact differentiation \wrt \  $z_1$. 

We now note that we have an inclusion of rings $V = T[[y_1,\ldots, y_d]]  \subseteq \widehat{A_\q}$. Furthermore $V$ is complete \wrt \ $I = (y_1,\ldots, y_d)$ and $I \widehat{A_\q} = \q \widehat{A_\q}$. Let $\xi \in V$. Write 
\[
\xi = \sum_{ j \geq 0} t_j y_1^j \quad \text{where} \ t_j \in T[[y_2,\cdots, y_d]].
\] 
Set 
\[
\xi_n = \sum_{j = 0}^{n} t_jy_1^j.
\]
Notice $\xi_n \in W$ and $\xi_n \rt \xi $ in $V$ (\wrt \ the $I$-adic topology on $V$).
Set
\[
\eta = \sum_{j\geq 1} jt_j y_1^{j-1} \quad \text{and} \  \eta_n = \sum_{j = 1}^{n}jt_jy_1^{j-1}.
\]
We note that $\eta_n \rt \eta $ in $V$. 

By \ref{f2} we get that $\xi_n \rt \xi $ in $\widehat{A_\q}$. As $\delta_1$ is continuous \wrt \ $\q\widehat{A_\q}$-adic topology in $\widehat{A_\q}$ we get that
$\delta_1(\xi_n) \rt \delta_1(\xi). $ Notice $\delta_1(\xi_n) = \eta_n$. It follows (using \ref{f2}) that $\delta_1(\xi) = \eta.$ Thus $\delta_1(V) \subseteq V$ and we have proved Claim 4.

(2) We have an inclusion of $R$-modules $\Der_T(A)_f \subseteq \Der_\ell(R)$. If $\n = \q R$ is a maximal ideal in $R$ (where $\q$ is a height $d$ prime ideal in $A$ and $f \notin \q$) then we have $\Der_\ell(R)_\n \subseteq \Der_\ell(R_\n)$. Note $R_\n = A_\q$ and by Claim 3 we have that $\Der_T(A)_\q = \Der_\ell(A_\q)$. In particular we have
$(\Der_T(A)_f)_\n = (\Der_\ell(R))_\n$ for every maximal ideal $\n$ of $R$. Therefore $\Der_T(A)_f = \Der_\ell(R)$. 

(3) (a). This follows from (2) and Claim 3. \\
(3)(b). This follows from Claim 2 and 3(a). \\

(4). This follows from (3).
\end{proof}

\section{Proof of Theorem \ref{main}}
In this section we prove our main Theorem. Let us first recall a result from the influential book \cite{B}.

\s \label{B-Hyp} Let $K$ be a field of characteristic zero  and let $R$ be a commutative Noetherian domain containing $K$ as a subring. Let $\g$ be the set of $K$-linear derivations of $R$ and let $D(R)$ be the subring of $\Hom_K(R,R)$ generated by $\g$ and the multiplication operators defined by elements in the ring $R$.

Let $\m$ be a maximal ideal of $R$. If $\delta \in \g$ then $\delta(\m^2) \subseteq \m$ and so $\delta$ induces a $R/\m$-linear map from $\m/\m^2$ to $R/\m$ which is called the \emph{tangent map} of $\delta$ at $\m$. We say that $\g$ has \textit{maximal rank} at $\m$ if every $R/\m$ linear map from $\m/\m^2$ to $R/\m$ is the tangent map of some $\delta \in \g$.

Now consider the following conditions:
\begin{enumerate}
\item
$\g$ has maximal rank at every maximal ideal in $R$.
\item
There exists an integer $n$ such that $\dim_{R/\m} \m/\m^2 \leq n$ for all maximal ideals $\m$ of $R$ and equality holds for some $\m$.
\item
The residue fields $R/\m$ are algebraic over $K$ for all maximal ideals $\m$ of $R$.
\item
If $M$ is a $R$-module and if $M_\m = M \otimes_R R_\m$ is finitely generated as a $R_\m$-module for all maximal ideals $\m$ of $R$ then $M$ is finitely generated $R$-module.
\end{enumerate}
Then the following is \cite[Chapter 2, Theorem 1.2]{B}:
\begin{theorem}\label{B-Th}[ with hypotheses as in \ref{B-Hyp}]
If the conditions (1)-(4) hold then $D(R)$ is left and right Noetherian and global dimension of $D(R)$ is $n$.

\begin{remark}\label{sutli}
Only condition (4) above is difficult to verify. However it is used only to prove $\g$ is finitely generated as a $R$-module. Thus if we can independently verify that $\g$ is a finitely generated $R$-module then in Theorem \ref{B-Th} all we need is conditions (1)-(3).
\end{remark}

We now give
\begin{proof}[Proof of Theorem \ref{main}.]
By Theorem \ref{chief} we get that $\g = \Der_\ell(R)$ is finitely generated as a $R$-module. Thus by Remark \ref{sutli} we only need to verify conditions (1)-(3) above.

By Proposition \ref{max} we get that $\dim_{R/\n} \n/\n^2 = d$ for each maximal ideal of $\n$ of $R$. Also by Lemma \ref{field-extn}  we get that $R/\n$ is a finite extension of $\ell$ for each  maximal ideal $\n$ of $R$. Thus we have verified conditions (2) and (3).

Let $\n$ be a maximal ideal of $R$. Then by Theorem \ref{chief} we get that $\Der_\ell(R)_\n = \Der_\ell(R_\n)$. Again by Theorem \ref{chief} we get that $\Der_\ell(R_\n)$ is a free $R_\n$-module of rank $d$. Furthermore by (\ref{tgt}) there exists a regular system of parameters $z_1,\ldots, z_d$ of $R_\n$ and $\delta_1,\ldots, \delta_d \in \Der_\ell(R_\n)$ such that
 \begin{equation}\label{tgt-2}
\delta_i(z_j) =  \begin{cases}   1 &\text{if} \ i = j, \\ 0 &\text{otherwise},   \end{cases} \quad \text{for} \ 1 \leq i, j \leq d.
\end{equation} 
We note that $z_1+ \n^2, \ldots, z_d + \n^2$ is a basis of the $R/\n$-vector space $\n/\n^2$. 

As $\Der_\ell(R)_\n = \Der_\ell(R_\n)$ there exists $D_i \in \Der_\ell(R)$ and $s_i \notin{\m}$ such that $\delta_i = D_i/s_i$ for $i = 1,\ldots, d$. Let $\ov{s_i}$ denote the image of $s_i$ in $R/\n$. Note $\ov{s_i} \neq 0$ for all $i$.
Thus by (\ref{tgt-2}) we get
 \begin{equation}\label{tgt-3}
D_i(z_j) =  \begin{cases}   \ov{s_i} &\text{if} \ i = j, \\ 0 &\text{otherwise},   \end{cases} \quad \text{for} \ 1 \leq i, j \leq d.
\end{equation} 
It is now elementary to show that $\g$ has maximal rank at $\n$. Thus we have verified (1). The result now follows from Theorem \ref{B-Th}.
\end{proof}
\end{theorem}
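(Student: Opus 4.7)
The plan is to apply Björk's Theorem \ref{B-Th} to $R$ with the ground field $\ell \cong k((X))$, and in view of Remark \ref{sutli} this reduces to verifying the three conditions (1)--(3) of \ref{B-Hyp}, since Theorem \ref{chief} already tells us that $\g = \Der_\ell(R)$ is a finitely generated (in fact projective of rank $d$) $R$-module, so condition (4) is not needed. The global dimension will then automatically be $n = d$, matching the dimension bound coming from condition (2).

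Conditions (2) and (3) are essentially already in hand. For (2), since $R$ is a regular domain of dimension $d$ (Proposition \ref{max}), every maximal ideal $\n$ of $R$ satisfies $\dim_{R/\n} \n/\n^2 = d$, so the bound $n = d$ holds uniformly and with equality at each $\n$. For (3), Lemma \ref{field-extn} tells us that every $R/\n$ is a finite (hence algebraic) extension of $\ell$. So the real content of the proof lies in verifying condition (1): that $\g$ has maximal rank at every maximal ideal $\n$.

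The main obstacle is exactly (1), because a priori the $\ell$-linear derivations of $R$ that are globally defined need not realize every tangent functional at a chosen $\n$. The idea is to use Theorem \ref{chief}(3) to reduce to the local situation: after localization at $\n$ we have $\Der_\ell(R)_\n = \Der_\ell(R_\n)$, and the latter is a free $R_\n$-module of rank $d$ with a basis $\delta_1, \ldots, \delta_d$ dual to a regular system of parameters $z_1, \ldots, z_d$ of $R_\n$ coming from the elements $y_1, \ldots, y_d$ produced by Lemma \ref{choose-elts}. The residues $z_i + \n^2$ form a basis of $\n/\n^2$, so the tangent functionals dual to this basis are precisely the tangent maps of the $\delta_i$. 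To promote these to global derivations, write $\delta_i = D_i/s_i$ with $D_i \in \Der_\ell(R)$ and $s_i \notin \n$; then $D_i(z_j) = s_i\, \delta_i(z_j)$, and since $\overline{s_i} \neq 0$ in $R/\n$, the tangent maps of the $D_i$ are an $R/\n$-basis of $\Hom_{R/\n}(\n/\n^2, R/\n)$. This establishes (1).

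With all three conditions verified and $\g$ finitely generated, Theorem \ref{B-Th} together with Remark \ref{sutli} yields that $D(R)$ is left and right Noetherian of global dimension $d$, which is the desired conclusion. The only nontrivial input beyond routine bookkeeping is the freeness statement in Theorem \ref{chief}(3)(b) and the existence of the dual basis $\delta_i$, which is precisely where the hypothesis that $f$ lies in the singular ideal $J$ (ensuring $R = A_f$ is regular) gets used.
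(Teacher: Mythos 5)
Your proposal is correct and follows essentially the same route as the paper: reduce to conditions (1)--(3) via Remark \ref{sutli} and the finite generation of $\g$ from Theorem \ref{chief}, verify (2) and (3) from Proposition \ref{max} and Lemma \ref{field-extn}, and establish the maximal rank condition (1) by clearing denominators in the dual basis $\delta_i = D_i/s_i$ coming from the local freeness statement in Theorem \ref{chief}(3). No substantive differences from the paper's argument.
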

We now ask:
\begin{question}
(with hypotheses as in Theorem \ref{main}). Is $D(R)$ a domain?
\end{question}
\section{Lyubeznik's conjecture for rings considered in this paper}
Let $R$ be the regular domain as in \ref{Hypothesis} and let $D(R)$ be the ring of $\ell$-linear differential operators on $R$. We first show that if Question \ref{Ly} has an affirmative answer then Lyubeznik's conjecture holds for $R$. We then show that a positive answer to a question on Bernstein Sato polynomials of power series will enable us to solve Question \ref{Ly}.

\s We note that $R$ can be considered both as a subring of $D(R)$ and also as a $D(R)$-module. Furthermore it is clear that $R$ is finitely generated as a $D(R)$-module.
Let $h \in R$ with $h \neq 0$. The usual arguments yield that $R_h$ is a $D(R)$-module. We ask
\begin{question}\label{Ly}
Is $R_h$  finitely generated as a $D(R)$-module?
\end{question}
We now show:
\begin{theorem}\label{LC}
If Question \ref{Ly} has an affirmative answer then Lyubeznik's conjecture hold's for $R$, i.e., if $I$ is any ideal in $R$ then $\Ass_R H^i_I(R)$ is finite for any ideal $I$ in $R$ and for $i \geq 0$.
\end{theorem}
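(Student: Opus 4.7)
The plan is two-fold: first realize each $H^i_I(R)$ as a finitely generated $D(R)$-module via a \v{C}ech complex argument, and then deduce finiteness of $\Ass_R$ from general properties of finitely generated $D(R)$-modules built on the order filtration of $D(R)$.

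For the first step, let $I = (h_1,\ldots,h_s)$ and form the \v{C}ech complex $\check{C}^\bullet(h_1,\ldots,h_s;R)$, whose $j$-th term is the direct sum of the localizations $R_{h_{i_1}\cdots h_{i_j}}$. Every $\ell$-linear derivation of $R$ extends uniquely to each such localization by the quotient rule, and these extensions commute with the natural restriction maps, so each term of $\check{C}^\bullet$ is a $D(R)$-module and the \v{C}ech differentials are $D(R)$-linear. By the hypothesized affirmative answer to Question \ref{Ly}, every term is finitely generated over $D(R)$. Since $D(R)$ is left Noetherian by Theorem \ref{main}, the cohomology $H^i_I(R) = H^i(\check{C}^\bullet)$ is finitely generated as a $D(R)$-module.

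For the second step, I would equip $D(R)$ with the order filtration in which $R$ sits in degree $0$, the set $\g = \Der_\ell(R)$ in degree $1$, and $D(R)_n$ is generated by products of at most $n$ derivations. Theorem \ref{chief} provides the key structural input: $\g$ is a finitely generated projective $R$-module of rank $d$, so $\mathrm{gr}\,D(R)$ is a commutative Noetherian $R$-algebra, a quotient of $\mathrm{Sym}_R(\g)$. For a finitely generated $D(R)$-module $M$, a good filtration makes $\mathrm{gr}\,M$ a finitely generated graded module over this symmetric algebra; its support projected to $\Spec R$ equals $\Supp_R M$, which therefore has only finitely many irreducible components, so $\Min_R M$ is finite. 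To pass from minimal primes to all associated primes, I would perform Noetherian induction on $\Supp_R M$: for each $\fp \in \Min_R \Supp_R M$, the $\fp$-torsion submodule is $D(R)$-stable (kernels and localizations are), the quotient has strictly smaller support, and iterating yields a finite filtration of $M$ by $D(R)$-submodules whose associated primes at each stage are controlled.

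The main obstacle is the Bernstein-style input in the second step: one must verify that $\mathrm{gr}\,D(R)$ is Noetherian and that every finitely generated $D(R)$-module admits a good filtration with finitely generated $\mathrm{gr}$. The case-by-case treatments in \cite[Chapters 2--3]{B} should transplant here once projectivity of $\g$ from Theorem \ref{chief} is in hand. An equivalent route would be to set up holonomicity for $D(R)$ directly: show that each $R_h$ is holonomic, that holonomic modules form a finite-length Serre subcategory closed under subquotients and extensions, and conclude that the composition factors of $H^i_I(R)$ are simple $D(R)$-modules with irreducible $R$-support, immediately yielding finiteness of $\Ass_R H^i_I(R)$.
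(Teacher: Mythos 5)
Your first step coincides with the paper's Lemma \ref{Lem-L-1}: \v{C}ech complex of $D(R)$-modules, terms finitely generated by the affirmative answer to Question \ref{Ly}, left Noetherianity of $D(R)$ (Theorem \ref{main}) giving finite generation of the cohomology. That part is fine.

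The second step is where the proposal both overcomplicates and leaves a genuine gap. The paper's Lemma \ref{Lem-L-2} needs none of the order-filtration machinery: since $D(R)$ is already known to be left Noetherian, a finitely generated $D(R)$-module $M$ satisfies the ascending chain condition on $D(R)$-submodules, and the only other input is that $\Gamma_P(M)$ is a $D(R)$-submodule for any ideal $P$ of $R$. One then runs Lyubeznik's filtration: choose $P_1$ \emph{maximal} in $\Ass_R M$, so that $M_1=\Gamma_{P_1}(M)$ has exactly one associated prime (any $Q\in\Ass_R\Gamma_{P_1}(M)$ lies in $\Ass_R M$ and contains $P_1$, hence equals $P_1$ by maximality); repeat on $M/M_1$; the chain $M_1\subseteq M_2\subseteq\cdots$ stabilizes by Noetherianity, and $\Ass_R M\subseteq\bigcup_j\Ass_R(M_j/M_{j-1})$ is finite. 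Your version instead takes $\fp$-torsion at a prime $\fp$ \emph{minimal} in $\Supp_R M$; but $\Ass_R\Gamma_{\fp}(M)$ consists of all associated primes of $M$ containing $\fp$, which is exactly the set whose finiteness is at issue, so the layers of your filtration are not ``controlled'' and the induction on the support does not close. Moreover the inputs you flag as the main obstacle --- Noetherianity of $\operatorname{gr}D(R)$, existence of good filtrations, or a theory of holonomic $D(R)$-modules --- are neither established in this setting nor needed; only finiteness of $\Min$ would follow from that route in any case, and maximality (not minimality) of the chosen prime is what makes each layer have a single associated prime. Replacing your second step by the maximal-associated-prime filtration, with termination supplied by Theorem \ref{main}, repairs the argument and reproduces the paper's proof.
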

For this we need the following two results:
\begin{lemma}\label{Lem-L-1}
Let $I$ be an ideal in $R$. Then for each $i \geq 0$ the local cohomology module $H^i_i(R)$ is a $D(R)$-module. Furthermore if Question \ref{Ly} has an affirmative question then $H^i_i(R)$ is finitely generated as a $D(R)$-module for all $i \geq 0$.
\end{lemma}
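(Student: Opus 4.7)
The plan is to realize $H^i_I(R)$ via the \v{C}ech complex and reduce the statement to facts about localizations of $R$. Choose generators $h_1,\ldots,h_s$ of $I$ and form the \v{C}ech complex $\C^\bullet$ whose $j$-th term is
\[
\C^j = \bigoplus_{1 \leq i_1 < \cdots < i_j \leq s} R_{h_{i_1}\cdots h_{i_j}},
\]
with the usual alternating-sign differentials built from localization maps, so that $H^i_I(R) \cong H^i(\C^\bullet)$. To put a $D(R)$-module structure on $H^i_I(R)$ it suffices to put compatible $D(R)$-structures on each $\C^j$ for which the differentials are $D(R)$-linear. The key point is that every localization $R_h$ inherits a canonical $D(R)$-module structure: each $\delta \in \g$ extends uniquely to $R_h$ by the quotient rule
\[
\delta\!\left(\frac{r}{h^n}\right) = \frac{\delta(r)}{h^n} - \frac{nr\,\delta(h)}{h^{n+1}},
\]
and together with the multiplication operators by elements of $R$ these generate an action of $D(R)$ on $R_h$ (the same argument used in the classical affine and power-series settings cited in the introduction). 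The natural maps $R_{h} \to R_{hh'}$ are $D(R)$-linear for the same reason, so the \v{C}ech differentials are $D(R)$-linear and each $H^i_I(R) = \ker d^i/\image d^{i-1}$ is a $D(R)$-module.

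For the second assertion, assume Question \ref{Ly} has an affirmative answer, so that $R_h$ is finitely generated as a $D(R)$-module for every nonzero $h \in R$. Each \v{C}ech term $\C^j$ is then a finite direct sum of finitely generated $D(R)$-modules, hence is itself finitely generated over $D(R)$. By Theorem \ref{main}, $D(R)$ is left Noetherian, so every subquotient of a finitely generated left $D(R)$-module is again finitely generated. Since $H^i_I(R)$ is a subquotient of $\C^i$, it is finitely generated as a $D(R)$-module.

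The substantive content of the lemma is entirely absorbed into the hypothesis of Question \ref{Ly} and the left Noetherianness coming from Theorem \ref{main}; the remaining moves are formal. The only point that requires care is the verification that the Leibniz-rule extension of $\g$ to $R_h$ really promotes to an action of the non-commutative ring $D(R)$, i.e., that the defining commutation relations $[\delta, r] = \delta(r)$ between derivations and multiplication operators continue to hold after inverting $h$. This is a direct computation from the formula for $\delta(r/h^n)$ above, and once it is in hand the rest of the argument is a standard \v{C}ech-plus-Noetherian-ring package.
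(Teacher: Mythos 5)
Your proof is correct and follows essentially the same route as the paper: realize $H^i_I(R)$ via the (modified) \v{C}ech complex, observe that each term is a finite direct sum of localizations $R_h$ carrying a natural $D(R)$-structure with $D(R)$-linear differentials, and then use the hypothesis of Question \ref{Ly} together with the left Noetherianness of $D(R)$ from Theorem \ref{main} to conclude finiteness of the cohomology. The only difference is that you spell out the quotient-rule extension of derivations to $R_h$, which the paper dismisses as ``the usual arguments.''
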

\begin{proof}
Let $I = (h_1,\ldots, h_s)$ and consider the \emph{modified \v{C}ech complex} $\C$ on $h_1,\ldots, h_s$. Note that $\C_i$ is a finite direct sum of modules $R_{h_{j_1}\cdots h_{j_i}}$ and the maps are the natural ones upto a sign. It follows that $\C$
is in fact a complex of (left)-$D(R)$-modules. It follows that $H^i_I(R) = H^i(\C)$ is a $D(R)$-module.

If Question \ref{Ly} has an affirmative answer then $\C_i$ is finitely generated $D(R)$-module for each $i \geq 0$.  As $D(R)$ is left Noetherian it follows that $H^i(\C) = H^i_I(R)$ is finitely generated as a $D(R)$-module.
\end{proof}

We also need the following result:
\begin{lemma}\label{Lem-L-2}
Let $M$ be a finitely generated $D(R)$-module. Then $\Ass_R M$ is a finite set.
\end{lemma}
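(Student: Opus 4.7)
The plan is to reduce the question to a commutative-algebraic one via the order filtration on $D(R)$. Set $F^n D(R)$ to be the $\ell$-linear operators of order at most $n$, so $F^0 D(R) = R$ and $F^1 D(R)/F^0 D(R) \cong \Der_\ell(R)$. The commutator bound $[F^n, F^m] \sub F^{n+m-1}$ makes $G := \bigoplus_n F^n D(R)/F^{n-1} D(R)$ a commutative $R$-algebra, and the canonical surjection $\operatorname{Sym}_R \Der_\ell(R) \rt G$ shows $G$ is Noetherian, since by Theorem \ref{chief}(4), $\Der_\ell(R)$ is a finitely generated projective $R$-module. If $M = D(R) m_1 + \cdots + D(R) m_s$, the standard good filtration $F^n M := \sum_i F^n D(R) \cdot m_i$ makes $\operatorname{gr} M$ a finitely generated $G$-module.

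Next I would show $\Ass_R M \sub \Ass_R \operatorname{gr} M$. Given $\fp = \ann_R m \in \Ass_R M$, take the smallest $i_0$ with $m \in F^{i_0} M$, and let $\bar m \in \operatorname{gr}^{i_0} M$ be its nonzero class. Then $\fp \sub \ann_R \bar m$; if this is strict, pick $r \in \ann_R \bar m \setminus \fp$, and note $rm \in F^{i_0-1} M$ with $rm \neq 0$ (since $r \notin \fp = \ann_R m$) and $\ann_R(rm) = (\fp : r) = \fp$ by primality. Replacing $m$ by $rm$ strictly decreases $i_0$. The descent terminates either with $\ann_R \bar m = \fp$ at some positive level, or with $i_0 = 0$, in which case $\bar m = m$ and $\ann_R \bar m = \fp$ automatically. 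Either way $\fp \in \Ass_R \operatorname{gr} M$.

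Finally, I would show $\Ass_R \operatorname{gr} M$ is finite. Since $G$ is a commutative Noetherian ring and $\operatorname{gr} M$ is a finitely generated $G$-module, $\Ass_G \operatorname{gr} M$ is finite. For $\fp \in \Ass_R \operatorname{gr} M$ witnessed by some $x \in \operatorname{gr} M$, one has $\ann_G(x) \cap R = \fp$, hence $\sqrt{\ann_G(x)} \cap R = \fp$. The intersection of the (finitely many) minimal primes of $\ann_G(x)$ contracts to $\fp$, and by the usual prime-avoidance argument (if $\fp = \bigcap \fP_j$ with each $\fP_j \cap R$ a prime containing $\fp$, some $\fP_j \cap R$ must equal $\fp$), at least one minimal prime $\fP$ satisfies $\fP \cap R = \fp$. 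Such $\fP$ lies in $\Ass_G \operatorname{gr} M$, so $\Ass_R \operatorname{gr} M$ is contained in the finite set $\{\fP \cap R : \fP \in \Ass_G \operatorname{gr} M\}$. The main technical issue is ensuring that $G$ is Noetherian, which rests entirely on the finite generation and projectivity of $\Der_\ell(R)$ supplied by Theorem \ref{chief}; once this is in hand, the descent on filtration level and the Noetherian contraction argument are routine.
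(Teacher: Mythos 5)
Your proof is correct, but it takes a genuinely different route from the paper. The paper's proof is Lyubeznik's filtration argument carried out directly over the noncommutative ring $D(R)$: for a maximal associated prime $P_1$ of $M$, the torsion submodule $\Gamma_{P_1}(M)$ is a $D(R)$-submodule with $\Ass_R \Gamma_{P_1}(M)=\{P_1\}$, and iterating produces a finite chain $0\sub M_1\sub\cdots\sub M_s=M$ with each quotient having a single associated prime; the chain terminates because $M$ is Noetherian over $D(R)$, which was already established in Theorem \ref{main}. You instead pass to the order filtration, use Theorem \ref{chief} to see that $\operatorname{gr}D(R)$ is a commutative Noetherian quotient of $\operatorname{Sym}_R\Der_\ell(R)$, prove $\Ass_R M\sub\Ass_R\operatorname{gr}M$ by descent on filtration level, and finish with the contraction argument for associated primes along $R\rt\operatorname{gr}D(R)$; all of these steps check out (in particular the prime-avoidance step showing some minimal prime of $\ann_G(x)$ contracts to $\fp$, and the fact that minimal primes of the support are associated). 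The paper's route is shorter and needs only left Noetherianity of $D(R)$ together with the standard fact that $I$-torsion submodules are stable under differential operators; yours requires setting up the filtered/graded machinery but is purely commutative once $\operatorname{gr}D(R)$ is in hand, avoids the $D$-stability of torsion functors, and gives slightly more information, namely that $\Ass_R M$ is contained in the contractions of $\Ass_{\operatorname{gr}D(R)}\operatorname{gr}M$ (essentially the components of the characteristic variety). Both arguments ultimately rest on Theorem \ref{chief}.
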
 
\begin{proof}
Let $I$ be an ideal in $R$. Set
$$\Gamma_{I}(M) = \{ m \in M \mid I^nm = 0, \ \text{for some} \ n \geq 1 \} = H^0_{I}(M). $$
Then $\Gamma_I(M)$ is a $D(R)$-submodule of $M$. 

The following is a standard argument for proving finiteness of associate primes of modules which are Noetherian over a ring of Differential operators, for instance see \cite{Lyu-1}.
We give it  here for the convenience of the reader.

We claim there is a finite filtration of $M$ by $D(R)$-submodules
$ 0  \subseteq M_1 \subseteq M_2 \cdots \subseteq M_{s-1} \subseteq M_s = M$ such that $M_j/M_{j-1}$ has only one associated
prime for $j = 1,\ldots, s$. For let $P_1$ be a maximal element in the set of the associated primes of $M$.
Then $\Gamma_{P_1} (M) $ is non-zero and has only one associated prime, namely, $ P_1$.  Set
$M_1 = \Gamma_{P_1} (M)$.  As argued before $M_1$  is a $D(R)$-submodule of  $M$, so $M/M_1$ is a $D(R)$-
module. Let $P_2 $ be a maximal element in the set of the associated primes of  $M/M_1$.
Then $\Gamma_{P_2}(M/M_1)$ is a non-zero $D(R)$-submodule of $M/M_1$ and has only one
associated prime, namely, $ P_2$. Set $M_2$ to be the preimage of  $\Gamma_{P_2} (M/M_1)$  in $M$. Since
$M$ is Noetherian  this process eventually stops. This proves the claim.
The set of the associated primes of M is contained in the union of the sets of the
associated primes of all  $M_i/M_{i-1}$  where $i  = 1, \ldots, s$. This proves our Lemma.
 
\end{proof}

We now give:
\begin{proof}[ Proof of Theorem \ref{LC}]
This follows from Lemma's \ref{Lem-L-1} and \ref{Lem-L-2}.
\end{proof}

\s \label{bs-s} \emph{A relation between Question \ref{Ly}  and Bernstein-Sato polynomial.}
Let $K$ be a field of characteristic zero and let $S = K[[X_1,\ldots, X_n]]$. Let $D(S)$ be the ring of $K$-linear differential operators over $S$.
Let $h \in S$ be a non-unit and let $b_h(z)$ be it's Bernstein-Sato polynomial (see \cite[Chapter 3, Corollary 3.6]{B} and also  \cite[Chapter 1, Remark 5.8]{B}).
Let $-c \in \ZZ$  is a lower bound  negative integer root of $b_h(z)$. Then it is easy to verify  that $S_h$ is generated as a $D(S)$-module by $h^{-c}$, see \cite[p.\ 460]{ABL}.
If $h \in S$ is in fact a polynomial then $-n $ is a lower  bound for the  roots of $b_h(z)$, see \cite{V}. Set $bs(h)$ to be the smallest negative  integer root of $b_h(z)$.

Set $\m = (X_1,\ldots, X_n)$ and if $h \in S$ is non-zero then set
\[
v(h) = \max \{ r \mid h \in \m^r \}.
\] 
This is a non-negative integer,  since by Krull's intersection theorem we have $\cap_{r \geq 1} \m^r = 0$. 
We now state our next:
\begin{question}\label{bs}
[with hypotheses as in \ref{bs-s}.]  Let $m$ be a positive integer. Is 
\[
K(m) = \sup\{ bs(h) \mid v(h) \leq m , \text{where}  \ h \in S\} \  \text{finite?}
\]
\end{question}
I believe that  to  answer this question it suffices to consider the case $K = \mathbb{C}$, the complex numbers. Motivated by this we make our final 
\begin{question}\label{bs-K} [with hypotheses as in Question \ref{bs}.]
Does there exists $c > 0$ such that $K(m) \leq c$ for any field $K$ of characteristic zero?
\end{question}
We now state the main result of this section
\begin{theorem}\label{rel}
If Question \ref{bs-K} has an affirmative answer for all  positive integers $m$ then so does Question \ref{Ly}. 
\end{theorem}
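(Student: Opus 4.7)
The plan, given a nonzero $h \in R$, is to produce a single integer $c \geq 0$ such that $R_h = D(R) \cdot h^{-c}$; this will yield finite generation of $R_h$ as a $D(R)$-module. The uniform constant $c$ will come from applying Question \ref{bs-K} at every completion $\widehat{R_\n}$. First I would show that the order function $\n \mapsto v_\n(h) := \max\{r : h \in \n^r R_\n\}$ on $\mathrm{MaxSpec}(R)$ is bounded: it is integer-valued and upper semi-continuous on the Noetherian regular scheme $\Spec(R)$, hence attains a maximum $M := \sup_\n v_\n(h) < \infty$. For each maximal ideal $\n$, Cohen's structure theorem combined with Lemma \ref{field-extn} gives $\widehat{R_\n} \cong K_\n[[z_1,\ldots,z_d]]$ with $K_\n = R/\n$ a characteristic-zero field. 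The image $\tilde h$ of $h$ there has order $\leq M$, so the assumed affirmative answer to Question \ref{bs-K} supplies a constant $c = c(M)$, independent of $\n$ and of $K_\n$, such that the smallest negative integer root of the Bernstein--Sato polynomial $b_{\tilde h}(z)$ is $\geq -c$. The standard consequence recalled in \ref{bs-s} then yields
\[
(\widehat{R_\n})_h \;=\; D(\widehat{R_\n}) \cdot h^{-c}
\]
at every maximal ideal $\n$.

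Next I would descend this via faithful flatness of $\widehat{R_\n}$ over $R_\n$. By Theorem \ref{chief}(3) there exist $\delta_1,\ldots,\delta_d \in \Der_\ell(R_\n)$ with $\delta_i(z_j) = \delta_{ij}$; these extend to partial derivatives on $\widehat{R_\n}$ with respect to a Cohen coefficient field $K_\n \subseteq \widehat{R_\n}$. The choice is unambiguous here because $\Der_\ell(K_\n) = 0$ for the finite separable extension $K_\n/\ell$ (characteristic zero), which also gives $D_\ell(\widehat{R_\n}) = D(\widehat{R_\n})$. Every element of $D(\widehat{R_\n})$ is an $\widehat{R_\n}$-linear combination of the monomials $\partial^\alpha$, so the $\widehat{R_\n}$-span of $D(R_\n) \cdot h^{-c}$ inside $(\widehat{R_\n})_h$ coincides with $D(\widehat{R_\n}) \cdot h^{-c}$, which is $(\widehat{R_\n})_h$ by the previous step. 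Hence the $R_\n$-module cokernel of the map $D(R_\n) \to (R_\n)_h$, $P \mapsto P \cdot h^{-c}$, vanishes after tensoring with $\widehat{R_\n}$, and by faithful flatness it is already zero: $(R_\n)_h = D(R_\n) \cdot h^{-c}$.

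Finally I would globalize. Theorem \ref{chief}(3)(a), together with the fact that $D(R)$ is generated as a ring by $R$ and $\Der_\ell(R)$, yields $D(R)_\n = D(R_\n)$ for every maximal $\n$; combined with the preceding step, the cokernel of the global map $D(R) \to R_h$, $P \mapsto P \cdot h^{-c}$, localizes to zero at every maximal ideal of $R$, hence is zero. This exhibits $R_h$ as a cyclic $D(R)$-module on the generator $h^{-c}$, answering Question \ref{Ly} affirmatively.

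The main obstacle I expect is the second paragraph: carefully matching the action of $D(R_\n)$ on $(R_\n)_h$ with that of $D(\widehat{R_\n})$ on $(\widehat{R_\n})_h$ through the bimodule tensor product $D(R_\n) \otimes_{R_\n} \widehat{R_\n}$, and fixing the coefficient field $K_\n$ compatibly with the $\delta_i$. The uniformity in Question \ref{bs-K} across all characteristic-zero fields $K$ is also essential: without it, $c$ would depend on $\n$ and no single global generator for $R_h$ could be extracted from the pointwise Bernstein--Sato data.
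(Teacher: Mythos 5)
Your proposal is correct and follows the same overall architecture as the paper: bound the $\n$-adic order of $h$ uniformly over $\operatorname{Max}(R)$, invoke the uniform Bernstein--Sato bound of Question \ref{bs-K} at each completion $\widehat{R_\n} \cong \kappa(\n)[[z_1,\ldots,z_d]]$ to get a single $c$ with $(\widehat{R_\n})_h = D(\widehat{R_\n})h^{-c}$, descend by faithful flatness of $R_\n \to \widehat{R_\n}$ using $D(R)\otimes_R \widehat{R_\n}\cong D(\widehat{R_\n})$, and globalize by checking the (co)kernel at every maximal ideal; the paper phrases the last three steps as stabilization of the ascending chain $D(R)h^{-p}$ at level $c$, which is the same argument as your cokernel computation. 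The one genuinely different ingredient is the boundedness of $\n \mapsto v_\n(h)$: the paper deduces it from the uniform Artin--Rees theorem of Duncan--O'Carroll (valid since $R$, being a localization of a complete local ring, is excellent), getting $(h)=\n^{c+1}\cap(h)=\n(h)$ for a hypothetical $\n$ of large order and a contradiction via $1-\xi$; you instead invoke upper semi-continuity of the order function on $\Spec(R)$. Your route works, but note that (i) semi-continuity of the order (equivalently, of the multiplicity of $R/(h)$ at points of a regular scheme) is itself a nontrivial theorem requiring excellence, so it deserves a citation on the same footing as the paper's appeal to Duncan--O'Carroll, and (ii) semi-continuity alone gives closedness of the loci $\{v \geq r\}$; to conclude boundedness you still need the small extra step that the stable member of this descending chain of closed sets is empty, since a point $\fp$ in the intersection would satisfy $h \in \bigcap_r \fp^r R_\fp = 0$. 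With those two points made explicit, your argument is a complete and valid alternative to the paper's Claim 1, and the rest coincides with the paper's proof.
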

\begin{proof}
Let $\n$ be a maximal ideal of $R$. As $R$ is a domain we have $\cap_{i \geq 1}\n^i = 0$, see \cite[ 8.10(ii)]{Mat}. Let $h \in R$ be non-zero.
Then
\[
v_\n(h) = \max\{i \mid h \in \n^i \} \quad \text{is a non-negative integer}.
\]
We now
\textit{Claim-1:} The set $V(h) =  \{ v_\n(h) \mid \n \ \text{a maximal ideal of } \ R \}$  is bounded above. \\
Proof of Claim-1:   Suppose if possible Claim-1 is not true. Then  for any positive integer $i$ 
there exists a maximal ideal $\n_i$ with $v_{\n_i}(h) \geq i$.

As $R$ is the localization of a complete local ring it is excellent. So by \cite{DC}  there exists $c > 0$ such that
\[
\n^j \cap (h) = \n^{j-c}((\n^c \cap (h)) \quad \text{for all} \ j \geq c \ \text{and for all maximal ideals $\n$ of $R$}.
\]
Choose $\n_i$ with $v_{\n_i}(h) > c + 2$.
Then we have 
$$ (h) = \n_i^{c+1} \cap (t)  = \n_i(\n_i^c \cap (h))  = \n_i (h).$$
It follows that there exists $\xi \in \n_i$ with $(1-\xi)(h) = 0$. As $R$ is a domain and $h \neq 0$ this forces $1-\xi = 0$ and so $\xi = 1 \in \n_i$. This is a contradiction. Thus Claim-1 is true.  

Let $m$ be an upper bound for $V(h)$.  As we are  assuming that Question \ref{bs-K} has an affirmative answer we get that  there exists  $c >0$  such that 
  $K(m) \leq c$ for any field $K$ of characteristic zero. 
  
  Consider the the following ascending chain $\FF$ of $D(R)$-submodules of $R_h$ whose union is $R_h$
  \[
  D(R)\frac{1}{h} \subseteq D(R)\frac{1}{h^2} \subseteq \cdots \subseteq D(R)\frac{1}{h^{c}} \subseteq \cdots \subseteq D(R)\frac{1}{h^p} \subseteq \cdots
  \]  
  We say $\FF$ stablizes  at level $q$ if $D(R)h^{-p} = D(R)h^{-q}$ for all $p \geq  q$.
  
   Let $\n$ be a maximal ideal of $R$. We  localize $\FF$  at $\n$ to get the ascending chain $\FF_\n$. We then tensor it with $\widehat{R_\n}$,  the completion of $R_\n$,  to get the ascending chain $\widehat{\FF_\n}$.  Set $\kappa(\n) = R/\n$.  Note  $\widehat{R_\n} \cong  \kappa(\n)[[Z_1,\ldots, Z_d]]$. Let $D(\widehat{R_\n})$ be the ring of $\kappa(\n)$-linear differential operators on $\widehat{R_\n}$. Then by \cite[Chapter 3, Lemma 1.5]{B} we get that
\[
D(R)\otimes_R \widehat{R_\n} \cong D(\widehat{R_\n})
\]
and we also get that $\widehat{\FF_\n}$ is an ascending chain of $D(\widehat{R_\n})$-submodules of $(\widehat{R_\n})_h$.  It follows from \ref{bs-s} that $\widehat{\FF_\n}$ stablizes at  level $c$. As the map $R_\n \rt \widehat{R_\n}$ is faithfully flat we get that $\FF_\n$ stablizes at  level $c$. 

We have shown that   $\FF_\n$ stablizes at level  $c$ for any maximal ideal $\n$ of $R$. It follows that   $\FF$ stablizes at level  $c$. Therefore $R_h$ is generated as a $D(R)$-module by $1/h^{c}$. In particular $R_h$ is finitely generated as a $D(R)$-module.
\end{proof}

\section{examples}
In this section we show that for each $d \geq 1$ there exist infinitely  many examples of regular  rings which satisfy our hypothesis \ref{Hypothesis}. For simplicity we will assume that $k$ is an algebraically closed field of characteristic zero. 

\begin{example}\label{Veronese}
Let $Q = k[x_1,\cdots, x_d, x_{d+1}]$  where $d \geq 2$. Set \\ $S = \widehat{Q} = k[[x_1,\cdots, x_d, x_{d+1}]]$. Let $n \geq 2$ be a positive integer and let $\zeta$ be a primitive $n^{th}$-root of unity and let $G = < \zeta^i \colon 0 \leq i \leq n-1>$. Then $G$ acts on both $Q$ and $S$ with the action $x_i \mapsto \zeta x_i$.
Let $B = Q^G$ and $A = S^G$. Note that $B \cong Q^{<n>}$ the $n^{th}$ Veronese subring of $Q$ and that $A = \widehat{B}$ the completion of $B$ at it's irrelevant maximal ideal. As $\Proj(B)$ is smooth we get that $A$ is an isolated singularity.
It is well known that $Cl(B)$, the class group of $B$ is $\ZZ/n \ZZ$. As $\Proj(B)$ is smooth and $\dim B = d+ 1 \geq 3$ we get that $B$ satisfies   $R_2$ property of Serre. So by a result of Flenner, $Cl(A) \cong Cl(B)$. 

Let $f = x_1^n + \cdots + x_{d+1}^n$. As $d \geq 2$, it is well-known that $f$ is irreducible in $Q$. Note $f \in A$. Let $\m$ be the maximal ideal of $S$. If $T$ is a quotient ring of $S$ then set $G(T) = \bigoplus_{n \geq 0} \m^n T / \m^{n+1}T$ the  
associated graded ring of $T$ \wrt \ its maximal ideal $\m T$.
Note $G(S/fS) \cong G(S)/f G(S) = Q/f Q$ which is a domain. So $S/fS$ is a domain. In particular $fS$ is a prime ideal in $S$. As $fA = fS \cap A$ we get that $fA$ is a prime ideal in $A$.

Set $R_{n,d} = A_f$. By the localization sequence of class groups we have $Cl(R_{n,d}) = \ZZ/n \ZZ$. Also note that $\dim R_{n,d} = d \geq 2$.
\end{example}

In \ref{Veronese} we had the restriction that $d \geq 2$ and that $R$ is not a UFD.
Next we give infinitely many one dimensional examples satisfying \ref{Hypothesis}. We also give infinitely many examples satisfying \ref{Hypothesis} of dimension $d \geq 3$ which are also UFD's.  We need to recall the notion of simple singularities.

\s \emph{Simple singularities:} Let $S = k[[x,y,z_2, \ldots, z_d]]$ with $d \geq 2$. Simple singularities are defined by the following equations:
\begin{align*}
(A_n) \quad &x^2 + y^{n+1} + \sum_{j = 2}^{d} z_j^2  &(n \geq 1), \\
(D_n) \quad &x^2y + y^{n-1} + \sum_{j = 2}^{d} z_j^2  &(n \geq 4), \\
(E_6) \quad &x^3 + y^{4} + \sum_{j = 2}^{d} z_j^2,  &   \\
(E_7) \quad &x^3 + xy^{3} + \sum_{j = 2}^{d} z_j^2,  &   \\
(E_8) \quad &x^3 + y^{5} + \sum_{j = 2}^{d} z_j^2.  & 
\end{align*}

\s \label{simple} Let $A = Q/(f)$ be a simple singularity. Then $A$ is an isolated singularity. In particular by a result due to Grothendieck $A$ is a UFD if $\dim A \geq 4$. 
We also  note that if $d \geq 2$ then $A/(z_d)$ is a simple singularity of the same type.

\s \label{Groth} \emph{Grothendieck Groups:}
Let $T$ be a commutative Noetherian ring and let $\mode(T)$ denote the category of all finitely generated $T$-modules. Let $\eU$ be an additive subcategory of 
$\mode(T)$ closed under extensions and let $\Gr(\eU)$ denote the \emph{Grothendieck group} of $\eU$. We recall the following  three facts of Grothendieck groups that we need.
\begin{enumerate}
\item
Let $(A,\m)$ be  a  \CM \ local domain. Let $\eC$ be the additive subcategory  of $\mode(A)$ consisting of all maximal \CM \ $A$-modules. Then
\begin{enumerate}
\item
The inclusion $i: \eC \rt \mode(A)$ induces an isomorphism of Grothendieck groups
$\Gr(\eC)$  and $\Gr(\mode(A))$, cf. \cite[13.2]{Y}.
\item
The map
$\rk \colon \Gr(\eC) \rt \ZZ$ defined by $[M] \mapsto \rank(M)$ is well-defined surjective group homomorphism. We have an isomorphism $\ZZ \oplus \ker \rk  \rt \Gr(\eC)$ where $(1,0) \mapsto [A]$.
\end{enumerate}
\item
Let $T$ be a regular ring of finite Krull dimension and let $K(T)$ be its K-group.
Then the natural map $K(T) \rt \Gr(\mode(T))$ is an isomorphism.
\item
Let $f \in T$.
The sequence
\[
\Gr(T/(f)) \xrightarrow{d_1} \Gr(T) \xrightarrow{d_0} \Gr(T_f) \rt 0
\]
is exact. Here
\[
d_1([M]) = [M] \quad \text{and} \quad d_0([N]) = [N_f].
\]
\end{enumerate}
\begin{remark}
If $f$ is $T$-regular then note that the class of $[T/(f)]$ is zero in $\Gr(T)$.
The reason is that we have an exact sequence $0 \rt T \xrightarrow{f} T \rt T/(f) \rt 0$.
\end{remark}

\begin{remark}\label{Gr-simple}
The Grothendieck groups of all simple singularities is known, see \cite[13.10]{Y}.
We will only need the following fact: Let $A$ be an $A_n$ singularity of dimension $l$. Then
\begin{enumerate}
\item
If $n$ is even then $\Gr(A) = \ZZ$ if $l$ is odd and is equal to $\ZZ \oplus \ZZ/(n+1)\ZZ$ if $l$ is even.
\item
If $n$ is odd then $\Gr(A) = \ZZ^2$ if $l$ is odd and is equal to $\ZZ \oplus \ZZ/(n+1)\ZZ$ if $l$ is even.
\end{enumerate}
\end{remark}

\begin{example}
Let $S = k[[x,y,z_2, \ldots, z_d]]$ with $d \geq 2$ and let $A = S/(f)$ be an $A_n$-singularity with $n$ \textit{even}. Note $\dim A = d + 1$. Set $R_{n,d} = A_{z_d}$.
We note that if $\dim A \geq 4$ then $A$ is a UFD and so $R_{n,d}$ is also a UFD.\\
\emph{Case 1:} $\dim A = d + 1$ is \emph{even}.\\
Consider the exact  sequence
\[
\Gr(A/(z_d)) \xrightarrow{d_1} \Gr(A) \xrightarrow{d_0} \Gr(R_{n,d}) \rt 0.
\]
Note $A/(z_d)$ is an $A_n$ singularity of dimension $d$. Also for all $d \geq 1$ the ring $A/(z_d)$ is a domain. By \ref{Groth} and \ref{Gr-simple} we have that
$\Gr(A/(z_d)) = \ZZ$ and is generated by the class of $A/(z_d)$.  
 By \ref{Groth}(3)
it follows that $d_1 = 0$.   It follows that 
\[
\ZZ \oplus \ZZ/(n+1) = \Gr(A) \cong \Gr(R_{n,d}) \cong K(R_{n,d}).
\]

\emph{Case 2:} $\dim A = d + 1$ is \emph{odd}.\\
We again consider the exact sequence
\[
\Gr(A/(z_d)) \xrightarrow{d_1} \Gr(A) \xrightarrow{d_0} \Gr(R_{n,d}) \rt 0.
\]
We again assert that $d_1 = 0$. Notice 
$\Gr(A/(z_d)) = \ZZ \oplus \ZZ/(n+1)$ and $\Gr(A) = \ZZ$. Clearly $d_1(\ZZ/(n+1)) = 0$. By \ref{Groth}(1)(b) the element $(1,0)$ of $\Gr(A/(z_d))$ is generated by the class of $A/(z_d)$. By \ref{Groth}(3) we get $d_1([A/(z_d)]) = 0$. Thus again $d_1 = 0$. So we have
\[
\ZZ  = \Gr(A) \cong \Gr(R_{n,d}) \cong K(R_{n,d}).
\]
\end{example}

\begin{remark}
The point of this section was to show that there exists infinitely many non-isomorphic regular rings satisfying our hypothesis \ref{Hypothesis}. I believe that 
the examples given above is only a tip of the iceberg. There should be many more examples. However I do not know how to prove they are non-isomorphic. 
\end{remark}

\begin{remark}
The reason why the above remark is pertinent is due to a related result which we now describe. Let $S = \bigoplus_{n \geq 0}S_n$ be a standard graded algebra over  an \textit{uncountable} algebraically closed field $k = S_0$ with $\Proj(S)$ smooth. Then there exists an uncountable family $\{f_\alpha \mid \alpha \in \Gamma \}$ of homogeneous elements  of positive degree with $S_\alpha \ncong S_\beta$ for $\alpha, \beta \in \Gamma$ and $f_\alpha \neq f_\beta$. This follows from a result in \cite{SR}. 
\end{remark}

\end{document}